\newtheorem{thm}{Theorem}
\newtheorem{prop}[thm]{Proposition}
\newtheorem{lemma}[thm]{Lemma}
\newtheorem{conj}{Conjecture}
\newtheorem*{question}{Question}
\newcommand{\Mbar}{\overline{M}}
\newcommand{\PP}{\mathcal{P}}
\DeclareMathOperator{\Pic}{Pic}
\DeclareMathOperator{\PSL}{PSL}
\DeclareMathOperator{\SL}{SL}
\title{\textbf{A non-boundary nef divisor on $\Mbar_{0,12}$}}
\author{A. Pixton}
\date{December 2011}
\begin{document}

\maketitle

\begin{abstract}
We describe a nef divisor $D_\PP$ on $\Mbar_{0,12}$ that is not numerically equivalent to an effective sum of boundary divisors.
\end{abstract}

\setcounter{section}{-1}
\section{Introduction}
The moduli space of stable curves of genus $0$ with $n$ marked points, $\Mbar_{0,n}$, has a stratification where the codimension $k$ strata are the irreducible components of the locus of curves with at least $k$ nodes. Fulton, motivated by analogy with toric varieties, asked whether any effective cycle is numerically equivalent to an effective sum of these strata (see \cite{Keel-McKernan}, Question 1.1). In the case of divisors, this is known to be false; Keel and Vermeire \cite{Vermeire} constructed effective divisors on $\Mbar_{0,6}$ that are not effective sums of boundary divisors (the codimension 1 strata).

For curves, however, the question is still open. The $1$-dimensional strata are called {\it F-curves} and the conjecture in this case (known as the F-conjecture) is that the cone of effective curves is generated by the F-curves. It is often stated in the following equivalent form:
\begin{conj}[The F-conjecture]
A divisor on $\Mbar_{0,n}$ is nef if and only if it has nonnegative intersection with every F-curve.
\end{conj}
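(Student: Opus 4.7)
The statement has two directions, one trivial and one deep. The forward direction---if $D$ is nef then $D \cdot F \geq 0$ for every F-curve---is immediate because F-curves are effective and a nef divisor intersects every effective curve nonnegatively. All of the substance lies in the converse, which by duality is equivalent to showing that the Mori cone $\overline{NE}(\Mbar_{0,n})$ of classes of effective $1$-cycles is generated by the classes of F-curves.

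My plan is to attack this dual statement by induction on $n$. The base case $n=4$ is trivial since $\Mbar_{0,4} \cong \mathbb{P}^1$ and its unique F-curve generates $\overline{NE}$. For the inductive step, given an irreducible curve $C \subset \Mbar_{0,n}$, I would split into two cases. First, suppose $C$ is contained in a boundary divisor $\delta_I$. Keel's description of $\delta_I$ as a product $\Mbar_{0,|I|+1} \times \Mbar_{0,n-|I|+1}$ lets one decompose $[C]$ numerically via its two pushforwards; the inductive hypothesis writes each pushforward as a nonnegative combination of F-curves on the smaller moduli, and those F-curves, crossed with a point of the other factor and then included into $\Mbar_{0,n}$, are again F-curves. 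Second, suppose $C$ is not contained in the boundary. Here I would consider a forgetful map $\pi : \Mbar_{0,n} \to \Mbar_{0,n-1}$, decompose $[C]$ by comparing it to $\pi^*[\pi(C)]$ and the contribution supported in $\pi$-fibers, and iterate, aiming to degenerate $C$ to a boundary-supported cycle without introducing any negative coefficients along the way.

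The main obstacle is precisely this second case. There is no obvious mechanism forcing a curve sweeping through the interior $M_{0,n}$ to be numerically equivalent to an \emph{effective} boundary-supported cycle: any reduction must be simultaneously compatible with every choice of forgotten marking, and the combinatorial constraints proliferate rapidly with $n$. This step is exactly what has resisted all attempts so far, and it is the reason the statement above appears as a conjecture rather than a theorem; the present paper's construction of the nef divisor $D_\PP$ does not attack this obstacle, but it will presumably serve as a nontrivial test case in that one can at least check by hand that $D_\PP \cdot F \geq 0$ for every F-curve $F$ on $\Mbar_{0,12}$.
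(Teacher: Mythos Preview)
The statement you are attempting to prove is the F-conjecture, which the paper presents explicitly as an \emph{open conjecture}; the paper does not prove it, and indeed notes that it is known only for $n\le 7$. There is therefore no proof in the paper to compare your proposal against.

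Your proposal is also not a proof, and you acknowledge this yourself. The forward direction is trivial, as you say. For the converse, your Case~1 (curves contained in a boundary divisor) is the standard and correct reduction: F-curves on a boundary component $\delta_I \cong \Mbar_{0,|I|+1}\times\Mbar_{0,n-|I|+1}$ push forward to F-curves on $\Mbar_{0,n}$, so induction handles any boundary-supported curve. But your Case~2 is the entire content of the conjecture. The forgetful-map heuristic you sketch does not produce an effective decomposition: writing $[C]$ in terms of a section-lift of $[\pi(C)]$ plus fiber contributions gives no sign control on the correction terms, and the generic fiber of $\pi$ is itself not a single F-curve once $n\ge 5$, so nothing simplifies. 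This is not a gap you can patch; it is precisely the open problem.

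Your final remark is accurate: the paper verifies that $D_\PP$ is both F-nef and (via basepoint-freeness) genuinely nef, so $D_\PP$ is consistent with the F-conjecture. What $D_\PP$ falsifies is the stronger Conjecture~\ref{conj_not_fulton}, not the F-conjecture itself.
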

Keel and McKernan \cite{Keel-McKernan} proved this conjecture for $n\le 7$, and for $n \ge 8$ the conjecture is still open. We usually say that a divisor that intersects every F-curve nonnegatively is {\it F-nef}, so the F-conjecture states that a divisor on $\Mbar_{0,n}$ is nef if and only if it is F-nef.

The following related conjecture will be the central subject of this paper. It is sometimes called ``Fulton's Conjecture,'' though it seems to have first been stated by Gibney, Keel, and Morrison \cite{Gibney-Keel-Morrison}.
\begin{conj}[\cite{Gibney-Keel-Morrison}, Question (0.13)]\label{conj_not_fulton}
Every F-nef divisor on $\Mbar_{0,n}$ is numerically equivalent to an effective sum of boundary divisors.
\end{conj}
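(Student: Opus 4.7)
Since the conjecture asserts that every F-nef (equivalently every nef, by definition of F-nef) divisor on $\Mbar_{0,n}$ is numerically equivalent to a nonnegative sum of boundaries, and the paper's title promises a counterexample on $\Mbar_{0,12}$, my plan is to refute Conjecture~\ref{conj_not_fulton} at $n=12$ by exhibiting an explicit divisor $D_\PP$ that is nef but not an effective sum of boundary divisors. The combinatorial input $\PP$ should be a highly symmetric family on the 12 marked points---a natural candidate is a $G$-invariant collection of set partitions for some finite subgroup $G \subset S_{12}$, with $G = \PSL_2(\mathbb{F}_{11})$ attractive because of its $2$-transitive action on the $12$ points of $\mathbb{P}^1(\mathbb{F}_{11})$---and $D_\PP$ would then be a weighted $\Q$-linear combination of divisor classes indexed by $\PP$.

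With $D_\PP$ defined, the argument splits into two independent verifications. First, nefness. F-nefness is a finite combinatorial calculation, since F-curves on $\Mbar_{0,12}$ are parametrized by $4$-part partitions of $\{1,\dots,12\}$; invoking $G$-symmetry reduces the intersection numbers to be checked to a manageable number of orbit representatives. Upgrading F-nefness to genuine nefness is more delicate because the F-conjecture is not known at $n=12$: my approach would be to realize $D_\PP$ as a nonnegative combination of pullbacks of nef divisors from spaces of smaller $n$ via forgetful maps, or as the pullback of a semiample class under a morphism to some other modular space where nefness is accessible.

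Second, show that $D_\PP$ does not lie in the boundary cone. The cone of effective sums of boundary divisors in $N^1(\Mbar_{0,12})_\Q$ is polyhedral, so it suffices to exhibit a separating numerical curve class $\alpha \in N_1(\Mbar_{0,12})_\Q$ with $\alpha \cdot \delta_I \geq 0$ for every boundary divisor $\delta_I$ but $\alpha \cdot D_\PP < 0$. Such an $\alpha$ cannot be the class of an effective curve, since otherwise $D_\PP$ would fail to be nef; constructing it should exploit some fine feature of $\PP$ that the boundary divisors collectively cannot detect, perhaps arising as a $G$-symmetrized combination of pseudo-effective classes whose boundary intersections are controlled by the combinatorics of $\PP$.

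The main obstacle is upgrading F-nefness to actual nefness. Verifying F-nefness and constructing the separating curve class are in principle finite calculations once $\PP$ is fixed, but proving nefness without the F-conjecture requires genuine geometric input---realizing $D_\PP$ as sections of a known semiample line bundle, or pulling it back from a situation where nefness is already established. This is likely the technical heart of the paper, and the choice of $\PP$ must be engineered precisely so that such a realization exists while still producing something outside the boundary cone.
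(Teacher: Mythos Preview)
Your overall strategy---exhibit an explicit $D_\PP$ with large symmetry, verify positivity against F-curves, and separate it from the boundary cone by a numerical curve class $\alpha$ pairing nonnegatively with every $\Delta_{S,T}$ but negatively with $D_\PP$---is exactly the paper's approach, and your guess of $\PSL_2(\mathbb{F}_{11})$ as the relevant symmetry group is correct.

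There is one genuine misreading that inflates the difficulty. Conjecture~\ref{conj_not_fulton} is about \emph{F-nef} divisors, not nef divisors; your parenthetical ``equivalently every nef, by definition of F-nef'' conflates the two, and the equivalence is precisely the unproven F-conjecture. To refute Conjecture~\ref{conj_not_fulton} you only need $D_\PP$ to be F-nef, which is the finite combinatorial check you describe. The step you flag as ``the main obstacle'' and ``the technical heart of the paper''---upgrading F-nefness to genuine nefness---is therefore not needed for the counterexample at all. The paper does eventually establish nefness, but only as a supplementary computational observation (the linear system $|D_\PP|$ turns out to be basepoint-free), not as part of the proof of Proposition~\ref{counterexample}.

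Two smaller points of comparison. First, the combinatorial input is not a configuration on all $12$ markings with a $2$-transitive $\PSL_2(\mathbb{F}_{11})$-action; rather, the $12$th point is distinguished and $\PP$ is the $(11,5,2)$ biplane on the remaining $11$ points, whose automorphism group happens to be $\PSL_2(11)$. The divisor $D_\PP$ is written in the Kapranov exceptional divisors $E_S=\Delta_{S\cup\{12\}}$ and is not $S_{12}$-symmetric. Second, your separating class $\alpha$ is realized very cleanly in the paper: one simply declares $C_\PP\cdot\Delta_{S,T}=1$ if $S$ or $T$ lies in $\PP$ and $0$ otherwise, and the biplane's balance condition (every pair in the same number of blocks) is exactly what makes this compatible with the relations among boundary divisors. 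No subtle $G$-symmetrization of pseudo-effective classes is required.
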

This conjecture has also been proven for $n\le 7$ \cite{Larsen}, and it has long been understood that it would imply the F-conjecture by a straightforward inductive argument, if true.

Gibney showed that this conjecture can be weakened slightly and still imply the F-conjecture:
\begin{conj}[\cite{Gibney}, Conjecture 1]\label{conj_gibney}
Every F-nef divisor on $\Mbar_{0,n}$ is of the form $c K_{\Mbar_{0,n}} + E$ where $c\ge 0$ and $E$ is an effective sum of boundary classes.
\end{conj}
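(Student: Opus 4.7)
The plan is to proceed by induction on $n$, exploiting the fact that both Conjecture \ref{conj_not_fulton} and Conjecture \ref{conj_gibney} are known for $n \le 7$. Given an F-nef divisor $D$ on $\Mbar_{0,n}$, I would first restrict $D$ to each boundary divisor $\Delta_S \cong \Mbar_{0,|S|+1} \times \Mbar_{0,n-|S|+1}$. Since every F-curve contained in $\Delta_S$ is also an F-curve of $\Mbar_{0,n}$, the restriction $D|_{\Delta_S}$ is F-nef on the product, and the inductive hypothesis applied to each factor should yield a decomposition $D|_{\Delta_S} = c_S K|_{\Delta_S} + E_S$ with $c_S \ge 0$ and $E_S$ an effective sum of boundary divisors of $\Delta_S$.

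Next I would attempt to assemble these local decompositions into a global one. Write $D = c\,K_{\Mbar_{0,n}} + E + R$, where $E$ is an effective boundary sum chosen to match the $E_S$ along each $\Delta_S$ and $c$ is chosen compatibly with the $c_S$. The problem then reduces to showing that the residual class $R$ is itself an effective boundary sum. A Mayer--Vietoris-style argument controls $R$ up to a class whose restriction to every boundary divisor vanishes, and the global F-nefness of $D$, tested on F-curves that meet the interior, would have to be used to pin down this residue.

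The principal obstacle is exactly this last step. F-nefness is a numerical condition tested only on one-dimensional strata, so it gives very little direct control over the interior of $\Mbar_{0,n}$. There is no a priori reason the constants $c_S$ produced by the inductive step should be globally consistent, nor that the residue $R$ must be effective-boundary. Indeed, the point of the present paper is to exhibit a non-boundary nef divisor $D_\PP$ on $\Mbar_{0,12}$, showing that such residues genuinely exist and refuting Conjecture \ref{conj_not_fulton}. Before committing to the inductive program for Conjecture \ref{conj_gibney}, I would therefore first test it directly on $D_\PP$: decompose $D_\PP - c\,K_{\Mbar_{0,12}}$ in the boundary basis for a range of $c \ge 0$ and check whether any such $c$ makes all coefficients nonnegative. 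If not, the paper's construction refutes Conjecture \ref{conj_gibney} as well; if so, the weaker conjecture survives this test and the inductive approach remains viable.
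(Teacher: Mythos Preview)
The statement you are attempting to prove is a \emph{conjecture}, and the paper does not prove it; it \emph{disproves} it. Proposition~\ref{counterexample} shows that $D_\PP$ is F-nef on $\Mbar_{0,12}$ yet is not numerically equivalent to any expression $cK_{\Mbar_{0,12}} + E$ with $c\ge 0$ and $E$ an effective boundary sum. Consequently no inductive scheme of the sort you outline can possibly succeed: the base or inductive step must fail somewhere at or before $n=12$, because the conclusion is false.

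Your final paragraph is the only part that is on target. You suggest testing whether $D_\PP - cK_{\Mbar_{0,12}}$ is ever an effective boundary sum for some $c\ge 0$; the paper carries out exactly this test, not by varying $c$ directly but by producing a witness curve class $C_\PP$ with $\Delta_{S,T}\cdot C_\PP \ge 0$ for all boundary divisors, $K_{\Mbar_{0,12}}\cdot C_\PP \ge 0$, and $D_\PP\cdot C_\PP < 0$. Any expression $D_\PP = cK + E$ with $c\ge 0$ and $E$ effective boundary would then force $D_\PP\cdot C_\PP \ge 0$, a contradiction. So the correct ``proof'' associated to this statement is a refutation via the pairing with $C_\PP$, and the inductive program in your first two paragraphs should be discarded.
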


However, we provide a counterexample to Conjectures~\ref{conj_not_fulton} and \ref{conj_gibney} when $n = 12$.

\begin{prop}\label{counterexample}
The divisor $D_\PP$ on $\Mbar_{0,12}$ is F-nef but is not numerically equivalent to a nonnegative linear combination of boundary divisors and the canonical divisor $K_{\Mbar_{0,12}}$.
\end{prop}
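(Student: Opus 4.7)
The plan splits into two essentially independent verifications: F-nefness of $D_\PP$, and non-membership in the cone $\Q_{\ge 0}\langle K_{\Mbar_{0,12}}, \delta_I\rangle$. For F-nefness, F-curves on $\Mbar_{0,12}$ correspond to unordered partitions of $\{1,\ldots,12\}$ into four nonempty blocks. The construction of $D_\PP$ should make it invariant under a large subgroup $G\le S_{12}$ coming from the symmetries of $\PP$ (given the distinguished role of $n=12$, natural candidates are $\PSL_2(\mathbb{F}_{11})$ or $M_{12}$), so $D_\PP\cdot F$ depends only on the $G$-orbit of $F$. I would enumerate the finitely many $G$-orbits of four-block partitions, compute $D_\PP\cdot F$ for each directly from the combinatorics of $\PP$, and check nonnegativity throughout.

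For non-membership, symmetry again reduces the problem. Averaging any hypothetical decomposition $D_\PP = c K_{\Mbar_{0,12}} + \sum a_I \delta_I$ over $G$ keeps it inside the cone, so it suffices to rule out a $G$-symmetric decomposition. If $G$ is transitive on $k$-subsets of the marked points for each $2\le k\le 6$, the $G$-invariant part of $N^1(\Mbar_{0,12})_\Q$ is $5$-dimensional, spanned by the symmetric boundary classes $B_k=\sum_{|I|=k}\delta_I$, and Mumford's formula expresses $K_{\Mbar_{0,12}}=\sum_k \kappa_k B_k$ explicitly. Writing $D_\PP=\sum_k b_kB_k$ reduces the feasibility of $D_\PP=cK_{\Mbar_{0,12}}+\sum a_kB_k$ with $c,a_k\ge 0$ to a one-parameter question: is there $c\ge 0$ with $b_k - c\kappa_k \ge 0$ for every $k$? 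Exhibiting a single index $k$ for which this fails---equivalently, producing a symmetric curve class $\gamma$ with $\gamma\cdot K_{\Mbar_{0,12}}\ge 0$, $\gamma\cdot B_j\ge 0$ for every $j$, but $\gamma\cdot D_\PP<0$---closes the argument.

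The main obstacle is computational: one needs a formula for $D_\PP$ from the data of $\PP$ clean enough that both the F-orbit intersections of Step~1 and the symmetric boundary coefficients $b_k$ of Step~2 can be read off mechanically. Once such a formula is in place, Step~1 is a substantial but routine orbit-by-orbit case check, and Step~2 is essentially a single linear-programming feasibility check in one variable. The genuine creative content of the result lies in the choice of $\PP$ itself: it must be delicate enough that the resulting $D_\PP$ is simultaneously $\ge 0$ against every F-curve and falls strictly outside the boundary-plus-$K$ cone, so that the two linear-algebraic outcomes are compatible and yield the required counterexample to Conjectures~\ref{conj_not_fulton} and \ref{conj_gibney}.
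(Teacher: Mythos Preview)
Your Step~2 rests on a hypothesis that fails here. The symmetry group of $D_\PP$ is $\PSL_2(11)$ of order $660$, acting on the eleven points $\{1,\ldots,11\}$ and \emph{fixing} the marking $12$; it is nowhere near transitive on $k$-subsets of $\{1,\ldots,12\}$. Even restricted to $[11]$ it cannot be transitive on $5$-subsets, since the eleven biplane blocks form a proper $G$-orbit (indeed the whole point of $\PP$ is to single them out). Consequently the $G$-invariant part of $N^1(\Mbar_{0,12})$ is much larger than the $5$-dimensional span of the $B_k$, $D_\PP$ does not lie in that span, and your ``one-parameter feasibility check'' never gets off the ground. The averaging trick only reduces you to ruling out nonnegative combinations of the $G$-orbit sums $\Delta_O$, of which there are many, so the separating functional you need must be nonnegative on each $\Delta_O$, not merely on each $B_k$.

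The paper sidesteps this entirely by writing down an explicit witness curve class $C_\PP$ (determined by $C_\PP\cdot\Delta_{S,T}=1$ if $S$ or $T$ lies in $\PP$ and $0$ otherwise) that is nonnegative on \emph{every individual} boundary divisor and on $K_{\Mbar_{0,12}}$, yet satisfies $D_\PP\cdot C_\PP<0$; no symmetry reduction is needed. For F-nefness the paper likewise avoids orbit enumeration: it writes $D_\PP=D_0-D'_\PP$ with $D_0$ the $S_{12}$-symmetric GIT/conformal-block divisor, whose F-degrees are given by a closed formula, and then checks $D_0\cdot C\ge D'_\PP\cdot C$ case by case using the biplane property that any two blocks meet in two points. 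Your orbit-by-orbit plan for Step~1 could in principle be carried out, but it is not what the paper does and you have not actually executed it.
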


We currently have no geometric explanation for this counterexample, but it is related to a highly symmetric combinatorial arrangement of subsets, the $(11,5,2)$ biplane (see Section~\ref{biplane}).

We do not know whether $n=12$ is the minimal $n$ for which Conjecture~\ref{conj_not_fulton} is false. We do give a simple argument (Lemma~\ref{pullback}) allowing us to shift the counterexample to any larger $n$, though, so there are only four values of $n$ for which the status of Conjecture~\ref{conj_not_fulton} is still unknown.
\begin{question}
Does Conjecture~\ref{conj_not_fulton} hold for $\Mbar_{0,n}$ with $8 \le n \le 11$?
\end{question}

We begin by reviewing notation in Section~\ref{notation}. In Section~\ref{construction}, we describe the counterexample divisor $D_\PP$ appearing in Proposition~\ref{counterexample} and show that its pullbacks under forgetful maps also provide counterexamples. In Section~\ref{computations}, we list a few additional properties of this divisor calculated by computer; in particular, it is basepoint-free, and hence nef.

\subsection{Acknowledgements}
I am grateful to R. Pandharipande for many helpful discussions about $\Mbar_{0,n}$. I would also like to thank A. Castravet and J. Tevelev for their computation described in Section~\ref{computations}, and J. Tevelev for discussions that were helpful for proving Lemma~\ref{pullback}.

My research was supported by a NDSEG graduate fellowship.

\section{Notation}\label{notation}

\subsection{The $(11, 5, 2)$ biplane}\label{biplane}
The divisor we construct is based on the combinatorial data of the $(11, 5, 2)$ biplane, a collection $\PP$ of $11$ five-element subsets of $[11] := \{1,2,\ldots,11\}$ with the properties:
\begin{enumerate}
\item Any two elements of $[11]$ are in precisely two elements of $\PP$.
\item Any two elements of $\PP$ have intersection of size two. 
\end{enumerate}
It turns out that $\PP$ is unique up to renaming the elements of $[11]$; one way of constructing such a biplane is to take the set $\{1,3,4,5,9\}$ (the nonzero quadratic residues mod $11$) together with its translates (taken mod $11$).

The $(11,5,2)$ biplane has symmetry group of order $660$, isomorphic to $\PSL_2(11)$. The divisor $D_\PP$ that we construct shares this symmetry group.

\subsection{Divisors on $\Mbar_{0,n}$}

The simplest divisors on $\Mbar_{0,n}$ are the boundary divisors $\Delta_{S,T} = \Delta_S = \Delta_T$, where $\{S,T\}$ is a partition of $\{1,\ldots,n\}$ with $|S|,|T|\ge 2$.

It is also convenient to define $\Delta_{\{i\}} = -\psi_i$, the negative of the cotangent line at the $i$th marked point.

These two types of divisors suffice to generate $\Pic(\Mbar_{0,n})$ (in fact, the boundary divisors alone do), and the space of relations between these generators has basis consisting of the relations
\begin{equation}\label{relations}
\sum_{i\in S, j\notin S}\Delta_S = 0
\end{equation}
for $1\le i < j\le n$.

\subsection{F-curves}

The dual graph to an F-curve has exactly one vertex of degree $4$. The class of the F-curve just depends on the partition of the markings into $4$ subsets determined by this vertex. If $\{A_1,A_2,A_3,A_4\}$ is a partition of $\{1,\ldots,n\}$ into four nonempty subsets, we write $C_{A_1A_2A_3A_4}$ for the corresponding F-curve class.

The intersection number of any F-curve with any divisor $\Delta_{S,T}$ has a simple formula:
\begin{equation}\label{pairing}
\Delta_{S,T}\cdot C_{A_1A_2A_3A_4} = \begin{cases}
1 \text{ if }S = A_i\cup A_j\text{ for some $i\ne j$} \\
-1 \text{ if }S\text{ or }T = A_i\text{ for some $i$} \\
0 \text{ else}. \end{cases}
\end{equation}

\section{The construction}\label{construction}
We begin by describing a curve class $C_\PP$ that directly corresponds to the biplane configuration $\PP$. It is uniquely determined by its pairings with the boundary divisors $\Delta_{S,T}$, since they generate the Picard group:
\[
C_\PP\cdot \Delta_{S,T} = \begin{cases}
1 \text{ if } S\text{ or }T \in\PP \\
0 \text{ else}. \end{cases}
\]
Of course, for such a divisor to exist, these pairings have to respect the relations between the divisors $\Delta_{S,T}$. The fact that they do turns out to be equivalent to every pair of elements of $[11]$ being contained in the same number of sets in the biplane $\PP$. This is actually the only biplane property that we will directly use. Of course, there are many similarly balanced configurations of subsets that we could consider using instead of $\PP$ to construct different interesting curve classes.

Since $C_\PP$ intersects every boundary divisor nonnegatively, any divisor that intersects it negatively cannot be an effective sum of boundary. Thus we view $C_\PP$ as a witness to the non-boundary nature of certain divisors.

We can also compute that
\[
C_\PP\cdot \Delta_{\{i\}} = \begin{cases}
-2 \text{ if } i = 12 \\
-3 \text{ else}. \end{cases}
\]

We now describe the divisor itself. Unlike with $C_\PP$, the definition of $D_\PP$ does not seem to have an obvious generalization to other combinatorial configurations of subsets.

The notation $E_S = \Delta_{S \cup \{n\}}$ will be convenient here; these are the exceptional divisors in the Kapranov model of $\Mbar_{0,n}$ with respect to the $n$th marked point. Then we define the divisor $D_\PP \in \Pic(\Mbar_{0,12})$ by
\[
D_\PP = -5 E_{\emptyset} - 4\sum E_i - 3\sum E_{ij} - 2\sum E_{ijk} - \sum E_{ijkl} - \sum E_S,
\]
where the first four sums are over all subsets of $[11]$ of sizes $1,2,3,4$ and the last sum runs over the subsets of $[11]$ of size $5$ or $6$ that are either equal to or disjoint from one of the eleven five-element subsets in the chosen biplane $\PP$.

We can now check that we have a counterexample to Conjectures~\ref{conj_not_fulton} and \ref{conj_gibney}.
\begin{proof}[Proof of Proposition~\ref{counterexample}]
This is just a matter of checking four things:
\begin{enumerate}[(a)]
\item $D_\PP \cdot C_{A_1A_2A_3A_4} \ge 0$ for any F-curve class $C_{A_1A_2A_3A_4}$
\item $\Delta_{S,T} \cdot C_\PP \ge 0$ for $|S|,|T| \ge 2$
\item $K_{\Mbar_{0,12}} \cdot C_\PP \ge 0$
\item $D_\PP \cdot C_\PP < 0$.
\end{enumerate}

Of these, (b), (c), and (d) all follow immediately from the definition of $C_\PP$, combined with the identity
\[
K_{\Mbar_{0,12}} = -\sum_{i=1}^{12}\Delta_{\{i\}} -2\sum_{|S|,|T| \ge 2}\Delta_{S,T}
\]
in the case of (c).

This leaves (a), which is just a matter of computing the intersection of $D_\PP$ with each of the finitely many F-curve classes on $\Mbar_{0,12}$. There are $611501$ such classes, so this check can be easily completed with a computer using \eqref{pairing}. Alternatively, we can write $D_\PP = D_0 - D'_\PP$, where
\[
D_0 = -5 E_{\emptyset} - 4\sum E_i - 3\sum E_{ij} - 2\sum E_{ijk} - \sum E_{ijkl}
\]
and 
\[
D'_\PP = \sum_{S\in\PP}\left(\Delta_S + \sum_{i\notin S}\Delta_{S\cup\{i\}}\right),
\]

The divisor $D_0$ here is very special; it is fully $S_{12}$-symmetric and can either be interpreted as the pullback of the distinguished polarization of the (symmetrically linearized) GIT quotient $(\mathbb{P}^1)^{12}\sslash \SL_2$ (see \cite{Alexeev-Swinarski}) or as a conformal block divisor (\cite{Fakhruddin}, Theorem 4.5). It is nef, and its degree on an F-curve $C_{A_1A_2A_3A_4}$ has a simple formula:
\[
D_0\cdot C_{A_1A_2A_3A_4} = \begin{cases}
0&\text{ if } \max(|A_i|) \ge 6 \\
\min(\min(|A_i|), 6 - \max(|A_i|))&\text{ else}. \end{cases}
\]

Using this formula along with \eqref{pairing}, it is straightforward to check that $D_0\cdot C_{A_1A_2A_3A_4} \ge D'_\PP\cdot C_{A_1A_2A_3A_4}$ for any F-curve $C_{A_1A_2A_3A_4}$. For example, if $D_0\cdot C_{A_1A_2A_3A_4} = 0$ then $|A_1| \ge 6$ (without loss of generality), and then the only possibilty of $D'_\PP\cdot C_{A_1A_2A_3A_4}$ being positive is if $A_2\cup A_3 \in \PP$. But then $A_2\cup A_4$ and $A_3\cup A_4$ cannot be elements of $\PP$ (since the union of any two sets in $\PP$ has cardinality $8 > 12 - 6$), and $A_2\cup A_3\cup A_4$ is an element of $\PP$ with a single point added, so $D'_\PP\cdot C_{A_1A_2A_3A_4} = 1 - 1 = 0$. The other cases are similar.

\end{proof}

Although we expect that both Conjecture~\ref{conj_not_fulton} and Conjecture~\ref{conj_gibney} remain false for all $n\ge 12$, we are only able to prove this for Conjecture~\ref{conj_not_fulton} at the moment.

\begin{lemma}\label{pullback}
Let $n\ge 3$ and let $\pi:\Mbar_{0,n+1}\to\Mbar_{0,n}$ be the map given by forgetting the final marking. If $D$ is an F-nef divisor on $\Mbar_{0,n}$ that is not numerically equivalent to an effective boundary divisor, then $\pi^* D$ also is F-nef and not numerically equivalent to an effective boundary divisor.
\end{lemma}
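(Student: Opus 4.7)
The plan is to handle the two conclusions separately, both using the geometry of the forgetful map $\pi$. F-nefness will follow from a projection-formula argument, and for the effective-boundary statement I argue the contrapositive.

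For F-nefness, let $C = C_{A_1A_2A_3A_4}$ be any F-curve on $\Mbar_{0,n+1}$. If some block $A_k$ equals $\{n+1\}$, then forgetting $n+1$ stabilizes the central $\mathbb{P}^1$ to a rigid $3$-pointed curve, so the family becomes constant in moduli and $\pi_* C = 0$. Otherwise, $\pi_* C$ is the F-curve class on $\Mbar_{0,n}$ whose partition is obtained by deleting $n+1$ from its block. Either way, $\pi^* D \cdot C = D \cdot \pi_* C \ge 0$ by F-nefness of $D$.

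For the contrapositive of the second claim, suppose $\pi^* D \equiv \sum_P b_P \Delta_P^{n+1}$ with $b_P \ge 0$, indexed by boundary partitions $P$ of $[n+1]$. The first key step is to show that the ``sectional'' coefficients vanish: for $j \in [n]$, let $\sigma_j \colon \Mbar_{0,n} \to \Mbar_{0,n+1}$ denote the $j$-th tautological section of $\pi$, whose image equals the boundary divisor $\Delta_{\{j,n+1\}}^{n+1}$. Intersect the relation with the class $F$ of a generic fiber of $\pi$: since $\pi_* F = 0$, we have $\pi^* D \cdot F = 0$. But $F$ is a smooth rational curve meeting each image $\sigma_j(\Mbar_{0,n})$ transversely in one point and disjoint from every other boundary divisor, so the intersection computation forces $\sum_{j \in [n]} b_{P_j} = 0$, where $P_j = \{\{j, n+1\}, [n] \setminus \{j\}\}$. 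Nonnegativity then forces each $b_{P_j}$ to be zero.

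The second step is to pull back the remaining relation via any section $\sigma_{j_0}$. Since $\pi \circ \sigma_{j_0} = \mathrm{id}$, we obtain $D \equiv \sum_P b_P \sigma_{j_0}^* \Delta_P^{n+1}$, with only the non-sectional partitions contributing. A direct computation shows that for $P = \{U, V\}$ with $n+1 \in V$ and $|V| \ge 3$, the pullback $\sigma_{j_0}^* \Delta_P^{n+1}$ equals $\Delta_U^n$ if $j_0 \in V$ and equals zero if $j_0 \in U$---in the latter case $\sigma_{j_0}(\Mbar_{0,n})$ is disjoint from $\Delta_P^{n+1}$, since any curve in the section image has $n+1$ and $j_0$ on a common bubble, which is incompatible with any node separating them. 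Each surviving $\Delta_U^n$ is a genuine boundary divisor on $\Mbar_{0,n}$ (with $|U|, |[n] \setminus U| \ge 2$), so $D$ is expressed as an effective boundary sum. The main obstacle is verifying the pullback identities for $\sigma_{j_0}^* \Delta_P^{n+1}$; these can be obtained from the universal-curve description of $\pi$ directly, or by combining the identity $\pi^* \Delta_S^n = \Delta_S^{n+1} + \Delta_{S \cup \{n+1\}}^{n+1}$ with $\sigma_{j_0}^* \pi^* = \mathrm{id}$.
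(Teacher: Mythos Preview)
Your proof is correct, and for the second claim it takes a genuinely different route from the paper's. The F-nefness part is identical. For the boundary statement, the paper intersects the assumed effective expression with every F-curve of the form $C_{A,B,C,\{n+1\}}$ (those contracted by $\pi$), obtaining a functional equation $f(A\cup B)=f(A)+f(B)$ for $f(S)=a_{S\cup\{n+1\},S^c}-a_{S,S^c\cup\{n+1\}}$; together with $f(S^c)=-f(S)$ and $f(\{k\})\ge 0$ this forces $f\equiv 0$, so the coefficients pair up as $a_{S\cup\{n+1\},S^c}=a_{S,S^c\cup\{n+1\}}$, and then the identity $\pi^*\Delta_{S,S^c}=\Delta_{S\cup\{n+1\},S^c}+\Delta_{S,S^c\cup\{n+1\}}$ and injectivity of $\pi^*$ finish. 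Your argument instead intersects only with the fiber class to kill the sectional coefficients $b_{\{j,n+1\}}$, and then applies a single section pullback $\sigma_{j_0}^*$ to read off $D$ directly as an effective boundary sum. The paper's approach stays entirely within F-curve intersections and yields the stronger intermediate symmetry of all the coefficients, at the cost of a small combinatorial detour; your approach is shorter and more geometric, but uses the fiber class (not itself an F-curve when $n\ge 5$) and the explicit computation of $\sigma_{j_0}^*\Delta_{U,V}$, which you justify correctly via compatibility of the partitions. One small remark: your final parenthetical that the pullback identities ``can be obtained \ldots by combining $\pi^*\Delta_S=\Delta_S+\Delta_{S\cup\{n+1\}}$ with $\sigma_{j_0}^*\pi^*=\mathrm{id}$'' only gives the sum of the two pullbacks, not each individually --- you still need the disjointness argument (which you give) to see that one of them vanishes.
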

\begin{proof}
First, $\pi^* D$ being F-nef follows immediately from the fact that for any F-curve class $C$, $\pi_* C$ is either $0$ or another F-curve class.

Now suppose for contradiction that $\pi^* D$ is numerically equivalent to
\[
\sum a_{S,T}\Delta_{S,T}
\]
with $a_{S,T}\ge 0$, where the sum runs over all partitions of $\{1,\ldots,n+1\}$ into two sets $S$ and $T$ of cardinality at least $2$.

The F-curve $C_{A,B,C,\{n+1\}}$ is contracted by $\pi$, so we have
\begin{equation}\label{contraction}
\begin{split}
0 &= \pi^* D\cdot C_{A,B,C,\{n+1\}} \\
&= (a_{A\cup\{n+1\},B\cup C}-a_{A,B\cup C\cup\{n+1\}})\\ &\quad+ (a_{B\cup\{n+1\},A\cup C}-a_{B,A\cup C\cup\{n+1\}})\\ &\quad- (a_{A\cup B\cup\{n+1\},C}-a_{A\cup B,C\cup\{n+1\}}).
\end{split}
\end{equation}
Let $f(S) = a_{S\cup\{n+1\},S^c} - a_{S,S^c\cup\{n+1\}}$ for $\emptyset\ne S\subset\{1,\ldots,n\}$, where $S^c$ is the complement of $S$ inside $\{1,\ldots,n\}$. Then \eqref{contraction} becomes
\[
f(A\cup B) = f(A) + f(B),
\]
where $A$ and $B$ are any two disjoint nonempty subsets of $\{1,\ldots,n\}$ such that $A\cup B$ is a proper subset of $\{1,\ldots,n\}$.

We also have $f(A^c) = -f(A)$ by the definition of $f$. Thus 
\[
f(\{1\}) + \cdots + f(\{n\}) = 0.
\]
However, for each $k$ we have $f(\{k\}) = a_{\{k,n+1\},\{k\}^c} \ge 0$. Thus $f(\{k\}) = 0$ for all $k$, and hence $f(S) = 0$ for all $S$. In other words, $a_{S\cup\{n+1\},S^c} = a_{S,S^c\cup\{n+1\}}$.

Note now that $\pi^*\Delta_{S,S^c} = \Delta_{S\cup\{n+1\},S^c} + \Delta_{S,S^c\cup\{n+1\}}$. Then $\pi^* D$ is numerically equivalent to $\pi^*B$ for 
\[
B = \sum_{\{S,S_c\}}a_{S\cup\{n+1\},S^c}\Delta_{S,S^c}.
\]
But $\pi^*:\Pic(\Mbar_{0,n})\to\Pic(\Mbar_{0,n+1})$ in injective (because $\pi$ has a section), so this implies that $D$ is numerically equivalent to $B$, which is effective boundary. This is a contradiction.
\end{proof}

\section{Computations}\label{computations}

We have checked a few basic properties of the divisor $D_\PP$ by computer.
First, $D_\PP$ generates an extremal ray in the cone of F-nef divisors on $\Mbar_{0,12}$. We do not know whether there are any extremal rays on $\Mbar_{0,12}$ other than the $S_{12}$-orbit containing $D_\PP$ that contradict Conjecture~\ref{conj_not_fulton}. 

Also, $D_\PP$ is effective, with a $66$-dimensional space of sections. By straightforward computation of the intersection of these sections using the Kapranov model for $\Mbar_{0,12}$ as an iterated blow-up of $\mathbb{P}^9$, we were able to check that this linear system is basepoint-free. Thus $D_\PP$ is nef, which can be interpreted as very weak evidence for the F-conjecture.

Since $D_\PP$ is nef and extremal F-nef, it is also an extremal nef divisor. Unsurprisingly, it is not extremal in the cone of effective divisors. Castravet and Tevelev have computed that $D_\PP$ is in fact an effective sum of boundary divisors and a pullback from $\Mbar_{0,8}$ of the unique (up to symmetries) ``hypertree'' divisor there, so it is consistent with their conjecture that the cone of effective divisors is generated by the boundary divisors and the hypertree divisors (defined in \cite{Castravet-Tevelev}). 

Finally, we can give a somewhat more geometric description of $D_\PP$, again in terms of the Kapranov morphism $\kappa_{12}:\Mbar_{0,12}\to\mathbb{P}^9$. If $Q$ is a generic quintic hypersurface in $\mathbb{P}^9$ containing the eleven $\mathbb{P}^4$s spanned by the five-tuples in $\PP$ along with the eleven complementary $\mathbb{P}^5$s, then $D_\PP$ is the class of the proper transform of $Q$ with respect to $\kappa_{12}$.

\bibliographystyle{amsplain}
\bibliography{Fcurve}

\vspace{+8 pt}
\noindent
Department of Mathematics\\
Princeton University\\
apixton@math.princeton.edu

\end{document}